\pgfplotsset{compat=1.15}
\date{}
\theoremstyle{plain}
      \newtheorem{theorem}{Theorem}
      \newtheorem{lemma}[theorem]{Lemma}
      \newtheorem{problem}{Problem}
      \newtheorem{proposition}[theorem]{Proposition}
\theoremstyle{definition}
      \newtheorem{definition}{Definition}
\theoremstyle{remark}
\title{On the number of edges of restricted matchstick graphs }
\author{Panna Geh\'er\thanks{E\"otv\"os University, Budapest, Hungary and Alfr\'ed R\'enyi Institute of Mathematics, Budapest, Hungary. Email: \texttt{geher.panna@ttk.elte.hu}.}
\and J\'anos Pach\thanks{Alfr\'ed R\'enyi Institute of Mathematics, Budapest, Hungary. Email: \texttt{pach@renyi.hu}.}
\and Konrad Swanepoel \thanks{Department of Mathematics, London School of Economics and Political Science. \newline Email: \texttt{k.swanepoel@lse.ac.uk}.}
\and G\'eza T\'oth\thanks{Alfr\'ed R\'enyi Institute of Mathematics, Budapest, Hungary. Email: \texttt{geza@renyi.hu}.}}
\begin{document}

\maketitle

\begin{abstract}

A graph whose vertices are points in the plane and whose edges are noncrossing straight-line segments of unit length is called a \emph{matchstick graph}. We prove two somewhat counterintuitive results concerning the maximum number of edges of such graphs in two different scenarios.

First, we show that there is a constant $c>0$ such that every triangle-free matchstick graph on $n$ vertices has at most $2n-c\sqrt{n}$ edges. This statement is not true for any $c>\sqrt2.$

We also prove that for every $r>0$, there is a constant $\varepsilon(r)>0$ with the property that every matchstick graph on $n$ vertices contained in a disk of radius $r$ has at most $(2-\varepsilon(r))n$ edges.

\end{abstract}

\section{Introduction}

Planar graphs have several interesting characterizations. One of them is given by the \mbox{so-called} circle packing theorem, also known as the Koebe--Andreev--Thurston theorem. It states that any planar graph can be realized by a set of circular disks in the following way: each disk represents a vertex, and two disks touch each other if and only if there is an edge connecting the two corresponding vertices of the graph. The number of edges that a planar graph on $n \geq 3$ vertices can have is at most $3n- 6$. This bound is tight with the extremal graphs being triangulations (planar graphs in which every face is bounded by a cycle of length $3$). However, the set of disks realizing a triangulation (with $n > 3$ vertices) necessarily contains disks of different radii. This motivates the following definition. A graph is called a \emph{penny graph} if it can be realized by a set of unit disks. Equivalently, a penny graph is a graph that can be drawn in the plane such that all edges are drawn as unit segments and the unit distance is the smallest distance among the vertices.

How can one determine the maximum number of edges of a penny graph on $n$ vertices? It is easy to see that every vertex has degree at most six, and also that not all vertices can be of degree six. If we call the vertices of degree smaller than six \emph{boundary points}, then it is not hard to argue, using the isoperimetric inequality, that there are at least $c\sqrt n$ boundary points, for some constant $c>0$ \cite{E46}.
Harborth \cite{Harborth74} proved that a penny graph on $n$ vertices can have at most $\lfloor 3n - \sqrt{12n - 3}\rfloor$ edges. His result is tight, as is demonstrated by a hexagonal piece of the regular triangular lattice.

Harborth later conjectured \cite{Harborth2} that his result holds for the larger graph class of so-called matchstick graphs (introduced by him in 1981 \cite{Harborth1}). \emph{Matchstick graphs} are plane unit-distance graphs; in other words, they can be drawn such that the vertices are represented by points in the plane, and the edges are non-crossing straight-line segments of unit length.
These graphs are much harder to handle than penny graphs.
For instance, unlike penny graphs, there is no upper bound for the maximum degree of the vertices of a matchstick graph, which made it much more difficult to prove Harborth's conjecture. After almost 40 years, the problem was recently settled by Lavoll\'ee and Swanepoel \cite{LS22}. The main idea of their proof is the following. If a matchstick graph has more than $3n-\sqrt{12n-3}$ edges, then almost all of its vertices must have degree $6$ and almost all faces must be triangular. Since all triangular faces are necessarily equilateral triangles, the area of the union of the bounded faces is bounded from below by $\Omega(n)$. By the isoperimetric inequality, the union of the unbounded faces has perimeter at least $\Omega(\sqrt{n})$, which means that there must be $\Theta(\sqrt{n})$ boundary points. Then it is shown that in an optimal configuration, a large part of the vertex set must lie on a triangular lattice. Then a modified isoperimetric inequality (a form of Lhuilier's inequality) is applied to show that the whole graph must lie on a triangular lattice. 

In this paper we consider two further problems about matchstick graphs that turn out to be much harder than they appear, and the solutions are somewhat counterintuitive.

Penny graphs and matchstick graphs with a maximum number of edges contain many triangles. It is an intriguing problem to determine the maximum number of edges that \emph{triangle-free} penny graphs and matchstick graphs with $n$ vertices can have. For the case of penny graphs, this question was first considered by Swanepoel \cite{S09}. He conjectured that the maximal number of edges of a triangle-free penny graph on $n$ vertices is $2n - 2 \sqrt n$. Apart from finding a non-trivial constant factor on the square-root term in \cite{E17}, this conjecture is still open.
The analogous question for matchstick graphs is also open, but has a different nature.

\begin{problem} \label{problem_trianglefree}
Determine or estimate the maximum number of edges $e(n)$ that a triangle-free matchstick graph on $n$ vertices can have.
\end{problem}

Again, using Euler's formula, we obtain that $e(n) \leq 2n-4$, and this bound is tight for triangle-free planar graphs. For matchstick graphs, we expect to obtain a better bound. In the absence of triangles, every face of a matchstick graph has at least four sides. Since there is no positive lower bound for the area of such faces, one cannot directly use  isoperimetric inequalities.
We might conjecture, as in the case of triangle-free penny graphs \cite{S09}, that, if $n$ is a perfect square, then the maximum number of edges is attained  by a $\sqrt n \times \sqrt n$ piece of the integer lattice (see Figure \ref{fig_grid}), in which case the maximum would be $2n-2\sqrt{n}-O(1)$. Our first theorem shows that this is not the case. In particular, there is a better lower bound than the square grid.

\begin{figure}[h!]
\centering

\begin{tikzpicture}[scale=0.7]
\foreach \x in {1,...,7} {
    \foreach \y in {1,...,7} {
        \fill (\x,\y) circle (5pt);
    }
}

\foreach \x in {1,...,7} {
\draw (\x, 1) --  (\x, 7); 
\draw (1, \x) --  (7, \x); 
}

\end{tikzpicture}

\caption{The $\sqrt n \times \sqrt n$ piece of the integer lattice defines a triangle-free matchstick graph with $n$ vertices and $2n-2\sqrt{n}-O(1)$ edges.}
\label{fig_grid}
\end{figure}
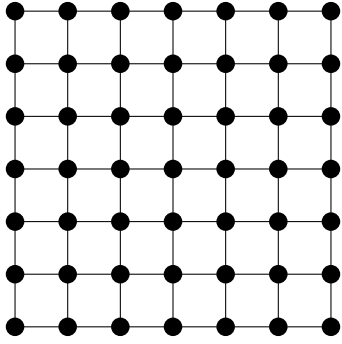

\begin{theorem}\label{thm1}
The maximum number of edges $e(n)$ in a triangle-free matchstick graph on $n$ vertices satisfies
\[ 2n-\sqrt{2}\sqrt{n}-O(1) \leq e(n)\leq 2n-\frac{\sqrt{2}}{5}\sqrt{n}.    \]
\end{theorem}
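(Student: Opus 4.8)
The engine for both inequalities is Euler's formula together with the girth-$4$ condition. Assume $G$ is connected (disconnected graphs are handled componentwise). Let $\ell$ be the length of the walk bounding the outer face. Every bounded face has at least $4$ sides, so summing face lengths gives $2e \ge 4(f-1)+\ell$, and together with $n-e+f=2$ this yields
\[ e \le 2n - \tfrac{\ell}{2} - 2, \]
with equality exactly when every bounded face is a unit rhombus. Hence the upper bound is equivalent to the boundary estimate $\ell \ge \tfrac{2\sqrt2}{5}\sqrt n$, and the extremal configurations are quadrangulations in which one only has to make the boundary $\ell$ as short as possible for a given $n$.

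\textbf{Lower bound (construction).} The square grid is wasteful because its edges use only two directions, forcing a square boundary of length $\approx 4\sqrt n$. I would instead use many directions. Fix the $D$ unit vectors $u_k=(\cos\tfrac{k\pi}{D},\sin\tfrac{k\pi}{D})$, $0\le k<D$, and let $Z=\sum_k[0,u_k]$ be the associated zonotope, a regular $2D$-gon with $2D$ unit edges. Any rhombic (zonotopal) tiling of $Z$ has unit rhombi as tiles, one for each of the $\binom{D}{2}$ pairs of directions; being a quadrangulation of a disk it is bipartite, hence triangle-free, and it is clearly a matchstick graph. Euler's formula gives $n=\binom{D+1}{2}+1$ and $\ell=2D$, so with $D=\lfloor\sqrt{2n}\rfloor$ (and a suitable adjustment of the zonogon to reach a general value of $n$),
\[ e = 2n-\tfrac{\ell}{2}-2 = 2n-D-2 = 2n-\sqrt2\,\sqrt n-O(1). \]
Geometrically this tiling fills a near-circular region with rhombi of average area $\approx 2/\pi$, and it is exactly this density (below $1$, yet bounded away from $0$) that lets it beat both the square grid and the round unit-square packing, whose boundary is $2\sqrt{\pi n}$.

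\textbf{Upper bound (the hard direction).} It remains to prove the isoperimetric-type inequality $\ell \ge \tfrac{2\sqrt2}{5}\sqrt n$, equivalently $n\le \tfrac{25}{8}\ell^2$. The naive route, bounding the enclosed area $A$ below by a constant times the number of faces and then using $A\le \ell^2/4\pi$, fails precisely because a unit rhombus can have area arbitrarily close to $0$; this is the obstruction flagged in the introduction. The plan is to prove a modified area bound instead. I would split the bounded faces into \emph{fat} faces (area $\ge a_0$) and \emph{thin} ones, bound the area from below by $a_0\cdot(\#\text{fat faces})$, and control the thin faces separately: a thin unit rhombus has short diagonal of length $\approx$ its area, so its two obtuse-angle vertices are forced to be close, and the non-crossing condition prevents many elongated thin rhombi from accumulating in a small neighborhood. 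Charging each thin face to this forced pair of nearby vertices, or to the stretch of boundary it is pressed against, should show that thin faces cannot substantially reduce $n$ relative to $\ell^2$; assembling the fat- and thin-face contributions into a single weighted isoperimetric inequality then gives $n\le \tfrac{25}{8}\ell^2$ at the cost of the constant.

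\textbf{Main obstacle.} The genuinely delicate point is the treatment of thin faces. Unlike penny graphs, matchstick graphs permit vertices to be arbitrarily close, so neither a minimum-area nor a minimum-distance hypothesis is available, and one must quantify exactly how a thin rhombus trades area for a shorter boundary. The factor-of-$5$ gap between the construction's $\sqrt2$ and the proven $\sqrt2/5$ is precisely the slack produced by handling the thin faces crudely, and narrowing it is what keeps the true value of $e(n)$ open.
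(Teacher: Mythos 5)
Your lower bound is essentially the paper's own construction: a rhombic (zonotopal) tiling of a regular $2D$-gon with $n=\binom{D+1}{2}+1$ vertices and $e=D^2=2n-D-2$ edges, extended to intermediate values of $n$ by adding rhombi around the boundary. That part is correct and matches the paper.

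The upper bound, however, has a genuine gap. You reduce the problem to the boundary estimate $\ell\ge\frac{2\sqrt2}{5}\sqrt n$ and then only outline a plan for the one step you yourself flag as the crux: that thin rhombi cannot pack many vertices inside a short outer boundary. No actual argument is given (``charging each thin face \dots should show \dots''), and this is exactly the obstruction that defeats isoperimetric methods here: arbitrarily many rhombi of arbitrarily small area can accumulate in a bounded region (Proposition~\ref{lower_bound} of the paper places $\Theta(n)$ rhombi inside a disk of fixed radius), so any charging scheme must be quantitative, and none is supplied. Note also that your target is strictly stronger than necessary: Euler's formula gives $e= 2n-4-\frac12\sum_{i\ge5}(i-4)f_i$, so every face with more than four sides contributes to the deficiency, not just the outer one, and the paper lower-bounds this full sum $F$ rather than the outer boundary length $\ell$ alone; whether $\ell\ge c\sqrt n$ holds for triangle-free matchstick graphs is not established by either argument. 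The paper's actual proof is purely combinatorial and avoids area entirely: it organizes the rhombus faces into maximal rhombus chains (rhombi glued along translates of a common unit vector), uses the fact that two maximal chains not contained in one another share at most one rhombus to obtain $f_4\le\binom{C}{2}$, where $C$ is the number of maximal chains, and uses the fact that each maximal chain must terminate on non-rhombus faces to obtain $2C\le\sum_{i\ge5}if_i\le 5F$; combining these with Euler's formula yields $n<\frac{25}{8}\left(F+\frac{6}{25}\right)^2$ and hence the stated bound. To complete your route you would need either to prove your boundary estimate (which this paper does not do) or to replace the area heuristic by a counting argument of this chain type.
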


A different constraint that we can put on a matchstick graph is that it must be contained in some fixed bounded region, such as a disk of radius $r$. Note that the diameter of the best known constructions for Problem \ref{problem_trianglefree} tends to infinity, as $n$ tends to infinity. However, for every $n$, it is easy to construct triangle-free unit distance graphs with $n$ vertices and $(1+c)n$ edges in a disk of fixed radius $r$, where $c=c(r) > 0$ (see Proposition~\ref{lower_bound}). The following question arises.

\begin{problem} \label{problem_boundedmatchstick}
For any fixed $r>0$, let $e_{r}(n)$ be the maximum number of edges of a matchstick graph on $n$ vertices contained in a
disk of radius $r$. 

For a given $r>0$, determine $e_{r}(n)$ or at least its asymptotic behaviour, as $n\rightarrow\infty.$ 
\end{problem}

Since the number of triangles has to be bounded by a constant depending on $r$, Euler's formula implies an upper bound of $2n+O_r(1)$ for the number of edges. Our second theorem shows that, again surprisingly, the answer is substantially
smaller than $(2-o(1))n$.

\begin{theorem}\label{boundedmatchstick}
For any $r>0$, there are $c_1(r), \, c_2(r)>0$ such that for any $n>0$, we have
\[\left(2-c_1(r)\right)n\le e_r(n)\le \left(2-c_2(r)\right)n.\]

In particular, the statement holds with 
$c_1(r)=5/r+o(1)$ and $c_2(r)=20/3^{16r^2}$.
\end{theorem}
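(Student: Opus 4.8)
The two bounds require separate treatments, and I would prove them independently.

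For the lower bound I would exhibit an explicit matchstick graph that is, combinatorially, a near-quadrangulation: almost every vertex has degree four and almost every bounded face is a rhombus. Packing such a structure into a disk of radius $r$ with $n\to\infty$ vertices forces the rhombi to have area tending to $0$, i.e.\ to be long thin needles whose two pairs of unit edges are nearly parallel. The delicate point, which I would have to handle with care, is that a thin rhombus has a long diagonal (close to $2$) and nearly parallel sides, so the elongation directions of neighbouring rhombi must rotate only slowly in order to keep all the unit segments non-crossing; arranging the needles in slowly-turning layers achieves this. Counting vertices and edges, the only deficiency from the ideal ratio $E=2n$ comes from a boundary layer of the packing of size $\Theta(n/r)$, which after optimising the layer geometry yields $e_r(n)\ge(2-5/r-o(1))n$. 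A cruder version of the same idea already gives the density $(1+c(r))n$ of Proposition~\ref{lower_bound}.

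For the upper bound I would first reduce to a purely combinatorial target using Euler's formula together with the fact, noted above, that a triangular face of a matchstick graph is a unit equilateral triangle, so at most $O_r(1)$ of them fit by an area count. Writing $f_k$ for the number of bounded $k$-gonal faces and using $\sum_k k f_k+(\text{outer})=2E$ and $F=E-n+2$, a short manipulation gives
\[ E \le 2n + O_r(1) - \tfrac12\sum_{k\ge5} f_k. \]
Thus it suffices to prove that $\sum_{k\ge5} f_k=\Omega_r(n)$: a fraction of the faces depending only on $r$ must have at least five sides. Equivalently, I must rule out the possibility that all but $o(n)$ faces are quadrilaterals (unit rhombi), which an area count again forces to be thin needles.

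The crux, and the step I expect to be the main obstacle, is the geometric lemma that turns dense clustering into a guaranteed linear number of large faces. I would partition the disk into a grid of roughly $3^{16r^2}$ cells of tiny side length; by pigeonhole some cell contains $\Omega(n\,3^{-16r^2})$ vertices. Since every edge has unit length, which dwarfs the cell, all edges incident to these clustered vertices leave the cell and appear, near it, as an almost-concurrent non-crossing pencil of unit segments. The plan is to show that such a pencil cannot be completed to an all-quadrilateral local picture: two nearly parallel unit chords springing from nearby points bound a thin rhombus whose far side is a near-tangential unit chord, and the ``dip'' of such a chord away from its endpoints is a fixed $\Theta(1/r)$ independent of how close the vertices are, so it is forced to cross other edges unless a large face intervenes. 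Making this dichotomy quantitative — extracting $\Omega_r(1)$ pentagonal-or-larger faces per crowded cell, and hence $\Omega_r(n)$ in total, with the stated dependence on $r$ — is the hard part; the doubly-exponential constant $20/3^{16r^2}$ reflects the fineness of the grid (equivalently, the number of edge-direction classes) needed to force the initial clustering.
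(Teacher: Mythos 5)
Your lower bound sketch is in the right spirit and essentially matches the paper's construction: a single lattice generated by two nearly parallel unit vectors $(\delta,\varepsilon)$ and $(\delta,-\varepsilon)$, forming a long thin strip of needle rhombi inside the disk, with the edge deficit coming from the $\Theta(n/r)$ boundary vertices of the strip (no ``slowly-turning layers'' are needed --- one sheared lattice suffices). Your Euler reduction for the upper bound is also correct: after discarding the $O_r(1)$ triangles by an area count, it suffices to show that a $c(r)$-fraction of the edges lie on a non-rhombus face.

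The gap is in the step that is supposed to produce those faces, and it is twofold. First, the arithmetic does not close: your grid has only about $3^{16r^2}=O_r(1)$ cells, so extracting ``$\Omega_r(1)$ pentagonal-or-larger faces per crowded cell'' yields only $O_r(1)$ large faces in total, which gives nothing beyond the trivial $e\le 2n+O_r(1)$; you would need each crowded cell to yield a number of large faces proportional to its vertex count, which you neither claim nor could obtain. Second, and more fundamentally, no purely local argument around a crowded cell can work, because the lower-bound construction itself refutes it: there the rows of the lattice are $2\varepsilon$ apart with $\varepsilon\to 0$, so a tiny cell in the middle of the strip contains a positive fraction of all vertices while every face in a large combinatorial neighbourhood of that cell is a thin rhombus. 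An almost-concurrent non-crossing pencil of unit segments completed entirely by thin rhombi is perfectly realizable; no crossing and no large face is forced near the cell, and your claimed $\Theta(1/r)$ ``dip'' obstruction does not materialize. The true obstruction is global: after also deleting the $O_r(1)$ \emph{fat} rhombi (angle at least $\pi/(50r^2)$, again by area), a chain of the remaining thin rhombi changes direction by at most $\pi/(50r^2)$ per step, so a monotone path through $O(r)$ of them travels distance more than $2r$ in an almost fixed direction and must exit the disk. The paper turns this into the key lemma that every regular edge has an irregular edge within distance $16r^2$ in the face-adjacency structure, and then charges each irregular edge with at most $2\cdot 3^{16r^2}$ regular ones (branching factor $3$ per rhombus), giving $e^*\ge e/(4\cdot 3^{16r^2})$. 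Your constant $3^{16r^2}$ appears as a grid fineness, which is not the mechanism that can make the proof work; as written, the upper-bound argument does not go through.
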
 

The dependency of $c$ on $r$ in the upper bound is probably far from the truth, while the lower bound is probably close to the optimal value.

\section{Triangle-free matchstick graphs}
In this section, we prove Theorem~\ref{thm1}.
We first consider the lower bound.

\begin{proposition}\label{prop:triangle-free-lower-bound}
For each $n$ there exists a triangle-free matchstick graph on $n$ vertices with $\left\lfloor 2n-\sqrt{2n-\frac{7}{4}}-\frac{3}{2}\right\rfloor$ edges.
\end{proposition}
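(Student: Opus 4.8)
The plan is to exhibit, for each integer $m$, an explicit triangle-free matchstick graph $G_m$ all of whose bounded faces are unit rhombi, and then to interpolate to the remaining values of $n$. The construction I would use is a \emph{zonotopal rhombus tiling}: fix $m$ pairwise non-parallel unit vectors $u_1,\dots,u_m$ (equally spaced in direction, say) and take a rhombic tiling of the centrally symmetric $2m$-gon $Z=\{\sum_i \lambda_i u_i : \lambda_i\in[0,1]\}$ whose tiles are the $\binom m2$ translated rhombi spanned by the pairs $\{u_i,u_j\}$; such a tiling exists whenever the associated line arrangement is simple. Every edge of the tiling is a translate of some $\pm u_i$, hence has unit length; the tiling is a genuine planar subdivision, so the edges are non-crossing straight segments; and since each bounded face is a rhombus and no rhombus diagonal is present, no three vertices are pairwise adjacent, so $G_m$ is triangle-free. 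Thus $G_m$ is a triangle-free matchstick graph, and the only substantive work is to count and to interpolate.

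Next I would count. The tiling has $F=\binom m2$ bounded (quadrilateral) faces and an outer boundary cycle of $2m$ edges (two parallel to each $u_i$). Counting edge–face incidences gives $2e=4\binom m2+2m$, so $e=m^2$, and Euler's formula yields $n=e-F+1=m^2-\binom m2+1=\binom{m+1}2+1$. Hence
\[ 2n-e=m+2,\qquad 2n-\tfrac74=m^2+m+\tfrac14=\Bigl(m+\tfrac12\Bigr)^2, \]
so that $2n-e=m+2=\sqrt{2n-\tfrac74}+\tfrac32$. This is exactly the asserted value, with no rounding, for every $n$ of the form $\binom{m+1}2+1$.

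It remains to realize the intermediate values of $n$. The idea is to grow $G_m$ towards $G_{m+1}$ by attaching rhombi to the boundary one at a time. Attaching a rhombus along a single boundary edge adds $2$ vertices and $3$ edges, raising $2n-e$ by $1$; attaching a rhombus into a boundary ``notch'' (sharing two consecutive boundary edges) adds $1$ vertex and $2$ edges and leaves $2n-e$ unchanged. Starting from $G_m$, one edge-attachment followed by a run of notch-attachments realizes every $n$ with $\binom{m+1}2+1\le n<\binom{m+2}2+1$ while keeping $2n-e$ equal to $m+2$ or $m+3$, and a direct check shows this agrees with $\bigl\lfloor 2n-\sqrt{2n-7/4}-3/2\bigr\rfloor$ for each such $n$.

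I expect two points to need the most care. The first is conceptual: one must recognize that the square grid (two edge directions, giving only $2n-2\sqrt n$) and even the triangular-lattice rhombille tilings (three directions, giving $2n-\sqrt3\sqrt n$) are suboptimal, and that using \emph{many} distinct directions is what drives the constant down to $\sqrt2$. Indeed a $2k$-gon zonotope with each direction repeated $N$ times has $\binom k2 N^2$ rhombi and perimeter $2kN$, so the number of faces is $\tfrac{k-1}{8k}(\text{perimeter})^2\to\tfrac18(\text{perimeter})^2$, and it is precisely the extreme regime $N=1,\ k=m$ that is exploited above. The second, more technical, obstacle is the bookkeeping in the interpolation: one must verify that the edge- and notch-attachments can always be carried out without the new rhombi overlapping existing tiles, and that the resulting deficiency $2n-e$ matches the floor expression for every $n$ rather than merely asymptotically. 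I anticipate this final verification, not the construction itself, to be where the routine but fiddly effort lies.
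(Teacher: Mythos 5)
Your construction is essentially the paper's own: it also tiles a (regular) centrally symmetric $2k$-gon with unit sides by $\binom{k}{2}$ rhombi, obtains $n=\binom{k+1}{2}+1$ and $e=k^2$ so that $e=2n-\sqrt{2n-\frac74}-\frac32$ exactly, and interpolates between consecutive such $n$ by building rhombi around the boundary. The one small repair needed in your interpolation is that attaching a whole rhombus along a boundary edge adds \emph{two} vertices at once and so skips $n=\binom{m+1}{2}+2$; as in the paper's figure, first attach a single pendant vertex (one new edge, raising $2n-e$ by $1$) and then complete it to a rhombus with the next vertex, after which your notch-attachments handle the remaining values of $n$.
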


\begin{proof}[Proof sketch.]
A regular $2k$-gon (or more generally, a convex centrally symmetric $2k$-gon) with unit side lengths can be tiled by $\binom{k}{2}$ rhombi to create a matchstick graph with $n=\binom{k+1}{2}+1$ vertices and $e=k^2$ edges.
(See Figure~\ref{fig:tiling} for the case $k=4$.)
For these values of $n$ and $e$ we have $e= 2n-\sqrt{2n-\frac{7}{4}}-\frac{3}{2}$.
For values of $n$ between $\binom{k+1}{2}+1$ and $\binom{k+2}{2}+1$ we can build around the boundary to attain $\left\lfloor 2n-\sqrt{2n-\frac{7}{4}}-\frac{3}{2}\right\rfloor$ edges, as in Figure~\ref{fig:tiling}.
\begin{figure}[h]
\begin{center}
\begin{tikzpicture}[line cap=round,line join=round,>=triangle 45,scale=0.75]
\draw [line width=1.5pt] (-6.645005805614367,5.3115128748036335)-- (-4.55,6.8);
\draw [line width=1.5pt] (-4.55,6.8)-- (-3.730015480361444,9.235624896199406);
\draw [line width=1.5pt] (-3.730015480361444,9.235624896199406)-- (-4.4982584629518705,11.688061636899114);
\draw [line width=1.5pt] (-4.4982584629518705,11.688061636899114)-- (-6.56128624004033,13.220562742410849);
\draw [line width=1.5pt] (-6.56128624004033,13.220562742410849)-- (-9.131092320514174,13.247764878225915);
\draw [line width=1.5pt] (-9.131092320514174,13.247764878225915)-- (-11.22609812612854,11.759277753029549);
\draw [line width=1.5pt] (-11.22609812612854,11.759277753029549)-- (-12.046082645767097,9.323652856830144);
\draw [line width=1.5pt] (-12.046082645767097,9.323652856830144)-- (-11.277839663176671,6.8712161161304355);
\draw [line width=1.5pt] (-11.277839663176671,6.8712161161304355)-- (-9.214811886088212,5.3387150106187);
\draw [line width=1.5pt] (-9.214811886088212,5.3387150106187)-- (-6.645005805614367,5.3115128748036335);
\draw [line width=1.5pt] (-12.046082645767097,9.323652856830144)-- (-9.95107684015273,10.81213998202651);
\draw [line width=1.5pt] (-9.95107684015273,10.81213998202651)-- (-9.131092320514174,13.247764878225915);
\draw [line width=1.5pt] (-9.95107684015273,10.81213998202651)-- (-7.888049063064271,9.279638876514776);
\draw [line width=1.5pt] (-7.888049063064271,9.279638876514776)-- (-7.068064543425715,11.71526377271418);
\draw [line width=1.5pt] (-7.068064543425715,11.71526377271418)-- (-9.131092320514174,13.247764878225915);
\draw [line width=1.5pt] (-7.068064543425715,11.71526377271418)-- (-4.4982584629518705,11.688061636899114);
\draw [line width=1.5pt] (-7.068064543425715,11.71526377271418)-- (-6.299821560835288,9.262827032014473);
\draw [line width=1.5pt] (-6.299821560835288,9.262827032014473)-- (-3.730015480361444,9.235624896199406);
\draw [line width=1.5pt] (-6.299821560835288,9.262827032014473)-- (-7.119806080473845,6.8272021358150665);
\draw [line width=1.5pt] (-7.119806080473845,6.8272021358150665)-- (-7.888049063064271,9.279638876514776);
\draw [line width=1.5pt] (-9.95107684015273,10.81213998202651)-- (-9.182833857562304,8.359703241326802);
\draw [line width=1.5pt] (-9.182833857562304,8.359703241326802)-- (-7.119806080473845,6.8272021358150665);
\draw [line width=1.5pt] (-9.182833857562304,8.359703241326802)-- (-11.277839663176671,6.8712161161304355);
\draw [line width=1.5pt] (-9.214811886088212,5.3387150106187)-- (-7.119806080473845,6.8272021358150665);
\draw [line width=1.5pt] (-7.119806080473845,6.8272021358150665)-- (-4.55,6.8);
\draw [line width=1.5pt,dashed] (-6.645005805614367,5.3115128748036335)-- (-4.170932083018117,6.006928191425282);
\draw [line width=1.5pt,dashed] (-2.0759262774037506,7.495415316621648)-- (-1.2559417577651946,9.931040212821054);
\draw [line width=1.5pt,dashed] (-4.170932083018117,6.006928191425282)-- (-2.0759262774037506,7.495415316621648);
\draw [line width=1.5pt,dashed] (-1.2559417577651946,9.931040212821054)-- (-2.0241847403556212,12.383476953520763);
\draw [line width=1.5pt,dashed] (-2.0241847403556212,12.383476953520763)-- (-4.0872125174440805,13.915978059032497);
\draw [line width=1.5pt,dashed] (-4.55,6.8)-- (-2.0759262774037506,7.495415316621648);
\draw [line width=1.5pt,dashed] (-3.730015480361444,9.235624896199406)-- (-1.2559417577651946,9.931040212821054);
\draw [line width=1.5pt,dashed] (-4.4982584629518705,11.688061636899114)-- (-2.0241847403556212,12.383476953520763);
\draw [line width=1.5pt,dashed] (-6.56128624004033,13.220562742410849)-- (-4.0872125174440805,13.915978059032497);
\draw [line width=1.5pt,dashed] (-9.131092320514174,13.247764878225915)-- (-6.657018597917925,13.943180194847564);
\draw [line width=1.5pt,dashed] (-6.657018597917925,13.943180194847564)-- (-4.0872125174440805,13.915978059032497);
\draw [fill=black] (-6.645005805614367,5.3115128748036335) circle (2.5pt);
\draw [fill=black] (-4.55,6.8) circle (2.5pt);
\draw [fill=black] (-3.730015480361444,9.235624896199406) circle (2.5pt);
\draw [fill=black] (-4.4982584629518705,11.688061636899114) circle (2.5pt);
\draw [fill=black] (-6.56128624004033,13.220562742410849) circle (2.5pt);
\draw [fill=black] (-9.131092320514174,13.247764878225915) circle (2.5pt);
\draw [fill=black] (-11.22609812612854,11.759277753029549) circle (2.5pt);
\draw [fill=black] (-12.046082645767097,9.323652856830144) circle (2.5pt);
\draw [fill=black] (-11.277839663176671,6.8712161161304355) circle (2.5pt);
\draw [fill=black] (-9.214811886088212,5.3387150106187) circle (2.5pt);
\draw [fill=black] (-9.95107684015273,10.81213998202651) circle (2.5pt);
\draw [fill=black] (-7.068064543425715,11.71526377271418) circle (2.5pt);
\draw [fill=black] (-9.182833857562304,8.359703241326802) circle (2.5pt);
\draw [fill=black] (-7.888049063064271,9.279638876514776) circle (2.5pt);
\draw [fill=black] (-6.299821560835288,9.262827032014473) circle (2.5pt);
\draw [fill=black] (-7.119806080473845,6.8272021358150665) circle (2.5pt);
\draw [fill=black] (-4.170932083018117,6.006928191425282) circle (2.5pt);
\draw[color=black] (-3.7,5.9) node {$17$};
\draw [fill=black] (-2.0759262774037506,7.495415316621648) circle (2.5pt);
\draw[color=black] (-1.6,7.476665528310907) node {$18$};
\draw [fill=black] (-1.2559417577651946,9.931040212821054) circle (2.5pt);
\draw[color=black] (-0.8,9.95) node {$19$};
\draw [fill=black] (-4.170932083018117,6.006928191425282) circle (2.5pt);
\draw [fill=black] (-2.0759262774037506,7.495415316621648) circle (2.5pt);
\draw [fill=black] (-1.2559417577651946,9.931040212821054) circle (2.5pt);
\draw [fill=black] (-2.0241847403556212,12.383476953520763) circle (2.5pt);
\draw[color=black] (-1.6,12.6) node {$20$};
\draw [fill=black] (-2.0241847403556212,12.383476953520763) circle (2.5pt);
\draw [fill=black] (-4.0872125174440805,13.915978059032497) circle (2.5pt);
\draw[color=black] (-3.8036698312956747,14.3) node {$21$};
\draw [fill=black] (-6.657018597917925,13.943180194847564) circle (2.5pt);
\draw[color=black] (-6.399120961682941,14.3) node {$22$};
\end{tikzpicture}
\end{center}
\caption{Triangle-free matchstick graph on $n$ vertices and $\left\lfloor 2n-\sqrt{2n-\frac{7}{4}}-\frac{3}{2}\right\rfloor$ edges for $16\leq n\leq 22$.}\label{fig:tiling}
\end{figure}
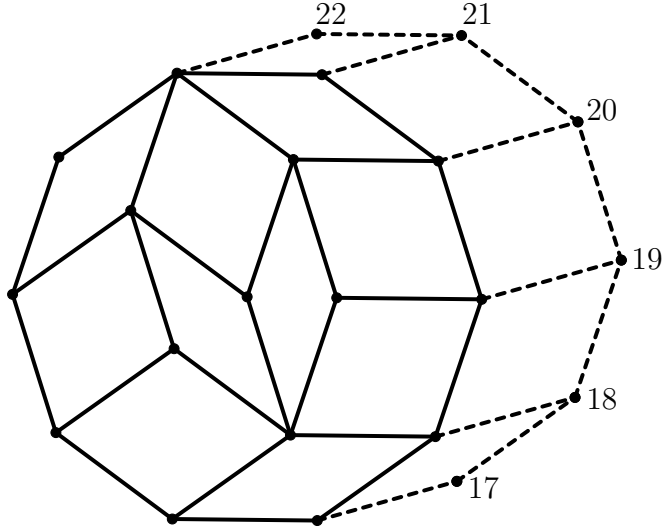
\end{proof}

We next prove the upper bound in Theorem~\ref{thm1}.

Consider a triangle-free matchstick graph $G$ on $n$ vertices and $e$ edges, where $e$ is the maximum among all such graphs on $n$ vertices.
By maximality, $G$ is connected.
Let $f_i$ denote the number of faces of $G$ with $i$ boundary edges, $i\geq 4$, where we count a boundary edge twice if the same face is on both of its sides.
(Note that $G$ is not necessarily $2$-edge-connected.)
Thus
\begin{equation}\label{eq:doublecounting}
2e=\sum_{i\geq 4} if_i.
\end{equation}
If we use Euler's formula
\begin{equation}\label{eq:Euler}
n-e+\sum_{i\geq 4}f_i = 2
\end{equation}
to eliminate $f_4$ from \eqref{eq:doublecounting}, we obtain
\begin{equation}\label{eq:consequence}
e = 2n-4-\frac12F,
\end{equation}
where $F:=\sum\limits_{i\geq 5}(i-4)f_i$.
We next find a lower bound for $F$ of the form $c\sqrt{n}$.
We do this by considering rhombus chains.
A sequence $a_1b_1, \, a_2b_2, \, \dots, \, a_kb_k$ ($k\geq 2$) is a \emph{$v$-rhombus chain} if for each $i=1, \, \dots, \, k$, $a_ib_i$ is an edge of $G$, with $b_i-a_i$ all equal to the same unit vector $v$, and for each $i=1, \, \dots, \, k-1$, $a_ia_{i+1}$ and $b_ib_{i+1}$ are edges of $G$, such that the rhombi $a_ia_{i+1}b_{i+1}b_i$ all have the same orientation.
Note that if $a_1b_1, \, a_2b_2, \, \dots, \, a_kb_k$ is a $v$-rhombus chain, then $b_ka_k, \, b_{k-1}a_{k-1}, \, \dots, \, b_1a_1$ is a $(-v)$-rhombus chain, and we consider these two rhombus chains to be equivalent.
A $v$-rhombus chain $a_1b_1, \, a_2b_2, \, \dots, \, a_kb_k$ is \emph{maximal} if it is not contained in a $v$-rhombus chain $a_0b_0, \, a_1b_1, \, a_2b_2, \, \dots,a_kb_k$ or $a_1b_1, \, a_2b_2, \, \dots, \, a_kb_k, \, a_{k+1}b_{k+1}$.
Any rhombus face with sides parallel to unit vectors $u$ and $v$ is contained in exactly one maximal $u$-rhombus chain and exactly one maximal $v$-rhombus chain, up to equivalence.

Let $C$ be the number of all inequivalent maximal rhombus chains.
Each rhombus is contained in two maximal rhombus chains. Geh\'er and T\'oth \cite[Claim 5]{GT24} showed that a $u$-rhombus chain and a $v$-rhombus chain that are not contained in each other have at most one rhombus in common. (Similar ideas can be also found in \cite{KS92} and \cite{K93}.)
This implies the bound
\begin{equation}\label{eq:binom}
f_4\leq\binom{C}{2}.
\end{equation}
By maximality, we also have that for each maximal rhombus chain $a_1b_1, \, a_2b_2, \, \dots, \, a_kb_k$, the edges $a_1b_1$ and $a_kb_k$ are adjacent to non-rhombus faces of $G$.
It follows that
\begin{equation}\label{eq:chain}
2C\leq\sum_{i\geq 5}if_i\leq 5F.
\end{equation}
From \eqref{eq:binom} and \eqref{eq:chain}, we obtain $f_4 < \frac{25}{8}F^2$.
By \eqref{eq:Euler} and \eqref{eq:consequence}, we have
\[ f_4 = e-n+2-\sum_{i\geq 5}f_i \geq e-n+2-F = n-\frac32 F. \]
It follows that
\[n < \frac{25}{8}F^2+\frac32 F < \frac{25}{8}\left(F+\frac{6}{25}\right)^2,\]
hence, by \eqref{eq:consequence} again,
\[e = 2n-2-\frac12 F < 2n - 2 +\frac{3}{25} -\frac{\sqrt{2}}{5}\sqrt{n}.\qed\]

\section{Bounded matchstick graphs}
The aim of this section is to prove Theorem~\ref{boundedmatchstick}.
We first establish the lower bound.

\begin{proposition} \label{lower_bound}
Let $r \geq 2$ be fixed, let $D$ be a disk of radius $r$, and let $n\rightarrow\infty.$ 

For every $n$, there exists a matchstick graph on $n$ vertices with $\left(2- 5/r -o(1) \right)n$ edges that fits into $D$.
\end{proposition}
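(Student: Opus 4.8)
The plan is to write down an explicit thin rhombus tiling. Since a face whose area is bounded below would cap the number of vertices at $O(r^2)$, any construction with $n\to\infty$ inside a fixed disk must use arbitrarily \emph{thin} rhombi; the whole point is to pack many thin ``columns'' into a region of small width while keeping every edge of unit length and the drawing non-crossing. The governing identity is that for a plane graph all of whose bounded faces are quadrilaterals, Euler's formula gives $e=2n-2-\tfrac12 B$, where $B$ is the number of boundary edges, so the entire deficit from $2n$ is the boundary of the tiling, and the task is to make that boundary as short as possible relative to $n$ while staying inside $D$.

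Concretely, I would fix a small parameter $\delta\in(0,1)$, set $h=\sqrt{1-\delta^2}$ so that $(\delta,h)$ and $(\delta,-h)$ are unit vectors, and take $M+1$ vertical columns at abscissae $0,\delta,2\delta,\dots,M\delta$. In the even-indexed columns I place points at heights $0,1,\dots,N$, and in the odd-indexed columns at heights $h,1+h,\dots,N+h$. I join consecutive points of each column by vertical unit edges, and join column $i$ to column $i+1$ by the unit slant edges of direction $(\delta,h)$ if $i$ is even and $(\delta,-h)$ if $i$ is odd. Every bounded face is then a unit rhombus with sides $(0,1)$ and $(\delta,\pm h)$. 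The decisive idea is the \emph{alternation} of the slant directions: it cancels the vertical drift, so all points stay at heights in $[0,N+1]$ no matter how large $M$ is, whereas a constant slant direction would push the height up like $M$ and destroy the construction.

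I would then check that this is a genuine matchstick graph inside $D$. All edges have unit length by construction; the edges between columns $i$ and $i+1$ lie in the slab $[i\delta,(i+1)\delta]$ and are mutually parallel, and distinct slabs meet only along column lines, so no two edges cross. The configuration has height at most $N+1$ and width $M\delta$; choosing $N$ with $N+1\le 2\sqrt{r^2-1}$ and then $\delta$ small enough that $M\delta\le 2$, a point at $(\pm 1,\pm\sqrt{r^2-1})$ is at distance exactly $r$ from the centre, so after centring, the whole graph fits into $D$ for every $M$, hence for arbitrarily large $n$.

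Finally I would count and optimise. With $n=(M+1)(N+1)$ vertices there are $(M+1)N$ vertical edges and $M(N+1)$ slant edges, so a direct count (equivalently the face identity above) gives $e=2n-M-N-2$. Taking $N$ as large as the disk allows, i.e. $N\approx 2r$, and $M\approx n/(2r)$, yields $e=2n-\tfrac{n}{2r}-O(r)=\bigl(2-\tfrac{1}{2r}-o(1)\bigr)n$, which is in fact stronger than the claimed $\bigl(2-5/r-o(1)\bigr)n$; for a general $n$ one keeps $N$ fixed in terms of $r$ and lets the final column be incomplete, changing $e$ by only $O(r)=o(n)$. The edge count and the optimisation are routine; the one step needing genuine care is the validity of the drawing — that the alternating slants keep the graph planar and inside $D$ while the rhombi stay non-degenerate — together with an honest accounting of the boundary (a zigzag of $\Theta(M)$ unit edges along the top and bottom, plus $\Theta(N)$ vertical edges on the two sides), since that boundary term is precisely the loss $M+N+2$ recorded above and is where all the deficit from $2n$ originates.
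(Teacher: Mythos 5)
Your construction is correct and is essentially the paper's own: both take a nearly degenerate lattice of unit rhombi clipped to a thin strip spanning (almost) a diameter of $D$, and read off the edge deficit from the $\Theta(n/r)$ boundary edges via the same Euler-formula bookkeeping. The only real difference is the choice of lattice basis --- your sheared square grid (one generator exactly along the long direction of the strip) loses only one neighbour per vertex on the two long sides of the boundary rather than two, so you in fact obtain the slightly better deficit $\approx n/(2r)$ in place of the paper's $\approx n/(r-1)$, which is comfortably within the claimed $\left(2-5/r-o(1)\right)n$.
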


\begin{proof}
Suppose that $r \geq 2$ be given and let $D$ be a disk of radius $r$.
Let $\varepsilon$ to be a very small number and $\delta=\sqrt{1-\varepsilon^2}$,  
vectors $\bar{a}=(\delta, \, \varepsilon)$, $\bar{b}=(\delta, \, -\varepsilon)$.
Let
\[P=\left\{  s\bar{a}+t\bar{b}\ |\ |t+s|\le p, \ |t-s|\le {m}, \text{ where } s, \, t \in \mathbb{Z} \right\}\]
where $p=\lfloor r\rfloor-1$ and $m=\lfloor n/2p\rfloor$. 
Elementary calculations show that 
if $\varepsilon$ is small enough, than it fits into $D$ and 
that $P$  
has less than $n$ vertices. 
Add some vertices in $D$ without creating a unit distance.

\begin{figure}[ht!]
\centering

\begin{tikzpicture}[thick,scale=0.7]
\foreach \x in {2,4,...,14} {
    \foreach \y in {1,...,7} {
        \draw[draw=black,fill=black] (\x,\y) circle (5pt);
    }
}
\foreach \x in {3,5,...,13} {
    \foreach \y in {1.5,...,6.5} {
        \draw[draw=black,fill=black!20!white] (\x,\y) circle (5pt);
    }
}
\draw[-latex] (8,4)--(9,4.5) node[midway,above] {$\bar{a}$};
\draw[-latex] (8,4)--(9,3.5) node[midway,below] {$\bar{b}$};
\end{tikzpicture}

\caption{$P$ is a piece of the lattice generated by vectors $\bar{a}$ and $\bar{b}$.}
\end{figure}
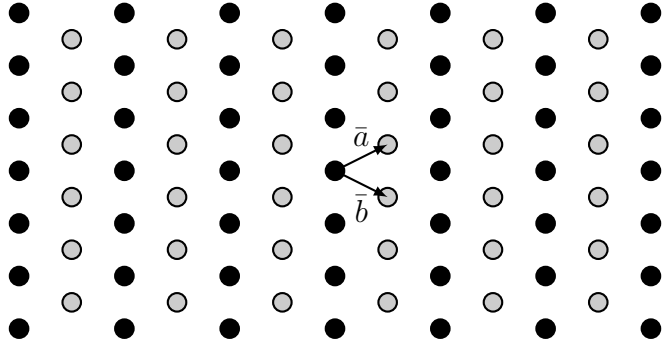

The unit distance graph $G$ defined on these points is a matchstick graph and it has $n$ vertices and 
$e=\left(2-5/r-o(1)\right)n$ edges. 
\end{proof}

We continue with the upper bound in Theorem \ref{boundedmatchstick}.
Let $G$ be a matchstick graph on $n$ vertices in $D$.
A triangular face has area $\sqrt{3}/4$ and the area of $D$ is $r^2\pi$, therefore, 
there are at most $4r^2\pi/\sqrt{3}<8r^2$ triangular faces.
Remove an edge from the boundary of each triangle. The resulting matchstick graph $G'$ does not contain any triangle. 

Let $R$ be a rhombus face of $G'$.   
Suppose that its angles  are $\theta\le\pi/2$ and $\pi-\theta$. 
If $\theta\ge\pi/(50r^2)$, then it is called a \emph{fat rhombus}.

Each fat rhombus has area at least $\sin(\pi/(50r^2))\ge \pi/(100r^2)$. 
Therefore, there are at most $100r^4$ fat rhombi. Remove an edge from the boundary of each fat rhombus and let $G''$ be the resulting graph. 
The graph $G''$ does not contain a triangle or a fat rhombus and we have $e(G)\le e(G'')+100r^4+8r^2$. Let $e=e(G'')$.

\smallskip

Let $N$ be the \emph{neighborhood graph} of $G''$. The vertices of $N$ 
correspond to the edges of $G''$. 
Two vertices, $v_1\neq v_2$ are connected in $N$ if the corresponding edges are on the boundary of 
the same face. So each edge of $N$ corresponds to a face of $G''$.
For any edges $\alpha$ and $\beta$ of $G''$, we say that they are neighbors (resp.\ at distance $m$) if the corresponding
vertices, $v(\alpha)$ and $v(\beta)$ are neighbors (resp.\ at distance $m$) in $N$. Note that $N$ is the line graph of the dual graph of $G''$.

We say that an edge of $G''$ is \emph{irregular} if it is adjacent to a face which is not a rhombus, otherwise we call it \emph{regular}. 

\begin{lemma}\label{monopath}
Any regular edge has an irregular edge at distance at most $16r^2$.
\end{lemma}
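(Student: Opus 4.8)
The plan is to argue by contradiction, exploiting the fact that the only rhombi left in $G''$ are \emph{thin}: since no fat rhombus survives, every rhombus face has its smaller angle below $\theta_0:=\pi/(50r^2)$, so its two edge directions, viewed as lines in $\mathbb{RP}^1$, differ by less than $\theta_0$. Fix a regular edge $e$ with direction $d$; both faces bordering it are thin rhombi. Let $U$ be the maximal connected union of rhombus faces containing them. Because $U$ is a \emph{maximal} rhombus region, each of its boundary edges borders a non-rhombus face and is therefore irregular. Hence it suffices to show that $e$ is within distance $16r^2$ in $N$ of $\partial U$; equivalently, that the ``dual depth'' of $U$ around $e$ is at most $16r^2$. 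The usual area/isoperimetric tools are useless here, since thin rhombi have arbitrarily small area and a single rhombus chain through $e$ can zig-zag and be arbitrarily long inside the disk; the argument must instead be purely directional.

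The first key step is a \emph{direction-propagation} estimate. Two rhombus faces sharing an edge agree on that edge's direction, and each differs from it by less than $\theta_0$ in its other direction; so crossing from one rhombus to an edge-adjacent one changes edge directions by less than $\theta_0$. Consequently, along any path in $N$ of length $\ell$ starting at $e$ and staying among rhombus faces, every edge encountered has direction within $\ell\theta_0$ of the line spanned by $d$. I would now assume, for contradiction, that no irregular edge lies within distance $16r^2$ of $e$. Then all faces within distance $16r^2$ of $e$ in $N$ are thin rhombi, and since $16r^2\cdot\theta_0<16\pi/50<\pi/3$, every edge of this patch makes an angle less than $\pi/3$ with the line of $d$. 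Orienting each such edge to have positive inner product with $d$, its $d$-coordinate increases by at least $\cos(\pi/3)=\tfrac12$ along it.

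The second step is to \emph{march towards the boundary} of $U$ in the direction $d$. The projection onto $d$ is now a genuine height function on the patch: I would walk through faces from $e$, at each step passing to an edge with strictly larger $d$-coordinate, so that each productive step gains at least $\tfrac12$. Since $U$ lies in a disk of radius $r$ and hence has $d$-extent at most $2r$, such a march cannot continue indefinitely; it must arrive at a vertex $p$ that is locally extremal in the direction $d$, all of whose incident patch edges point ``backwards''. The face on the forward side of $p$ cannot be a convex thin rhombus lying inside $U$ (such a rhombus would extend $U$ further in the $d$-direction, contradicting the extremality), so it is a non-rhombus face, and the edges of $p$ bordering it are irregular and reached within the budget $16r^2$. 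This contradicts the assumption and proves the lemma.

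The step I expect to be the main obstacle is controlling the \emph{number of $N$-steps} used by the march, and in particular what happens at a high-degree ``fan'' vertex where many ultra-thin rhombi (each of angle far below $\theta_0$) accumulate: there the march cannot simply advance, and rerouting around the fan to locate a forward edge must be charged against an angular budget. It is precisely the comparison of the fixed budget (of order $\pi/3$) with the per-adjacency angular change $\theta_0=\pi/(50r^2)$ that is responsible for the quadratic bound $16r^2$, rather than the linear-in-$r$ bound one might naively hope for; making this charging argument precise while keeping the total step count below $16r^2$ is the delicate part.
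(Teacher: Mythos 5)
Your overall strategy is the same as the paper's: use the absence of fat rhombi to show that every edge reachable within $16r^2$ steps in $N$ through rhombus faces makes angle less than $16r^2\cdot\pi/(50r^2)<\pi/3$ with the starting direction, then march in that direction gaining at least $1/2$ per edge in projection until the projection exceeds the diameter $2r$ of the disk. That part is sound and is exactly what the paper does. The genuine gap is the one you flag and then leave open: bounding the \emph{number of $N$-steps} consumed by the march. This is not a technicality but the heart of the lemma, and the fix you sketch --- charging the rerouting around a fan vertex against an angular budget --- does not work. The $N$-distance needed to get around a vertex $p$ is the number of rhombus faces incident to $p$ that must be traversed, while the angle spent is only the sum of their angles at $p$; since matchstick graphs have unbounded degree, a single vertex can carry arbitrarily many ultra-thin rhombi whose angles at $p$ sum to far less than $\theta_0$, so arbitrarily many $N$-steps cost essentially no angle. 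Your march can therefore be stranded at one fan whose far side is at $N$-distance much larger than $16r^2$ from $e$, without the $\pi/3$ budget ever being violated.

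The paper avoids this by never walking around a vertex at all. Its \textsc{Extend-Path} algorithm maintains a monotone path, always moves to the face \emph{directly above} a path edge (one $N$-step), and handles local minima by flipping ``hats'' (replacing the two lower edges of a rhombus resting on the path by its two upper edges). The step count is then controlled by a purely combinatorial potential, the convexity number $c(P)$ (the number of slope inversions among ordered pairs of path edges): each hat flip decreases $c$ by one, and extending a path of length $i-1$ increases it by at most $i-1$, so a path of length $l=4r$ is produced in at most $l(l+1)/2+l\le 16r^2$ operations, each a single $N$-step. A potential of this combinatorial kind, rather than an angular one, is what your argument is missing. (A smaller, fixable point: at your ``locally extremal'' vertex $p$, the forward face fails to be a rhombus not because it ``would extend $U$ in the $d$-direction,'' but because its angle at $p$ would have to exceed $2\pi-2\cdot\pi/3>\pi$, which no rhombus angle can achieve.)
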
 

\smallskip

Before proving Lemma \ref{monopath}, we finish the proof of Theorem \ref{boundedmatchstick}. 
Let $e^*$ denote the number of irregular edges of $G''$. 
Let $f_i$ be the number of faces with $i$ boundary edges in $G''$.
By the definition of irregular edges, we have $2e-4f_4 \geq e^*$.

\smallskip
By (3), we have 
\begin{equation}\label{eq:e^*}
e=2n-4-\frac{1}{2}\sum\limits_{i\geq 5}(i-4)f_i
< 2n-\frac{1}{2}\sum\limits_{i\geq 5}\frac{i}{5}f_i
= 2n- \frac{1}{10} (2e-4f_4)
\le 2n-\frac{e^*}{10}.
\end{equation}

For each regular edge $\alpha$, assign one of the \emph{closest} irregular edges
$\epsilon(\alpha)$.
By Lemma \ref{monopath} 
we know that the distance between 
$\alpha$ and $\epsilon(\alpha)$ is at most $16r^2$, and in the shortest path 
between $v(\alpha)$ and $v(\epsilon(\alpha))$ in $N$  
all edges correspond to rhombi. 
\smallskip

Now we estimate the number of edges $\beta$ with the same assigned edge, that is, 
$\epsilon(\beta)=\epsilon(\alpha)$. 
For any such $\beta$ there is a path of length at most $16r^2$ from $\epsilon(\alpha)$ to $\beta$ 
all of whose edges correspond to rhombi. To build such a path, at each vertex we have three choices
to continue, corresponding to the other three sides of the rhombus. 
Thus, we have at most $\sum_{i=1}^{16r^2}3^i<2\cdot 3^{16r^2}$ different paths ending at $\epsilon(\alpha)$. Therefore, for the number of irregular edges $e^*$ we have
$2\cdot 3^{16r^2}e'\ge e-e^*$, so \[e^*\ge \frac{e}{4\cdot 3^{16r^2}}.\]
Consequently, by \eqref{eq:e^*}, we have
\begin{align*}
    e &\le 2n-4-\frac{e}{40\cdot 3^{16r^2}}\\
    e &\le \left(2-\frac{1}{20\cdot 3^{16r^2}} \right)n.
\end{align*}

Finally,
\[e(G)\le \left(2-\frac{1}{20\cdot 3^{16r^2}} \right)n+100r^4+8r^2.\]

\qed

\begin{proof}[Proof of the Lemma \ref{monopath}]
Suppose for a contradiction that there is an edge $\alpha$ 
with the property that its $16r^2$-neighborhood contains only regular edges. Without loss of generality we can assume that $\alpha$ is horizontal; otherwise, we rotate the drawing of $G''$.
\smallskip

A \emph{monotone path} is a path in $G''$ that intersects any vertical line at most once. 
Let $P$ be a monotone path in $G''$ and suppose that $R$ is a 
rhombus-face whose lower left and lower right edges are in $P$. Then $R$ is called a \emph{hat} on $P$. 
\smallskip

We apply the following procedure which builds a monotone path.

\bigskip

\textsc{Algorithm Extend-Path($\alpha$)}

\smallskip

\noindent \textsc{Step $0$.} Let $P_1$ be the path of length $1$ that consists of the edge $\alpha$.
Set $i=2$.

\smallskip

For $i=2, \ldots, $ we do the following.

\noindent \textsc{Step $(i, \, 1)$}. We have $i\ge 2$. 
Suppose that we already have a monotone path $P_{i-1}$.

If there is a \emph{hat} $H$ on it, then replace its two lower edges by its two upper edges in $P_{i-1}$. Let $P_{i-1}$ be the resulting monotone path.
Repeat \textsc{Step $(i, \, 1)$}.

If there is no hat on $P_{i-1}$, then go to \textsc{Step $(i, \, 2)$}.

\smallskip

\noindent \textsc{Step $(i, \, 2)$}. We have a monotone path $P_{i-1}$ of length $i-1$ with no hat on it.
Let $p_0, \, p_1, \, \ldots, \, p_i$ be the vertices of $P_{i-1}$ from left to right and let $F_j$ be the face on the upper side of 
$\epsilon_j=p_{j-1}p_j$. If $F_j$ is not a rhombus, then $\epsilon_j$ is an irregular edge. Let $\epsilon=\epsilon_j$ and {\textbf{Stop.}} \textbf{Return $(\epsilon, \, P_{i-1})$}.

\smallskip
We can assume now that all $F_j$ are rhombi and all of them are different. 
For every $j$, $\epsilon_j$ 
is \emph{rightsided} (resp.\ \emph{leftsided})
if $\epsilon_j$ is the lower \emph{right} (resp.\ lower \emph{left}) side of $F_j$. 
If $\epsilon_1$ is \emph{rightsided}, 
then $P_{i-1}$ can be extended by the lower left edge of $F_1$ to a monotone path $P_i$ 
of length $i$. Let  
$P_{i}$ be this path. Increase $i$ by one and go to \textsc{Step $(i, \, 1)$}.

Now we can assume that $\epsilon_1$ is \emph{leftsided}. 
But then $\epsilon_2$ is also \emph{leftsided}, otherwise 
$F_1$ and $F_2$ would both contain the points slightly above $p_1$, which is impossible since they
internally disjoint. By repeated application of the same argument, we get that all edges 
$\epsilon_1, \, \epsilon_2, \, \ldots, \, \epsilon_{i-1}$ are \emph{leftsided}.

So $\epsilon_{i-1}$ is 
\emph{leftsided},
therefore, $P_{i-1}$ can be extended by the lower right  
edge of $F_{i-1}$ to a monotone path $P_i$ 
of length $i$. Let  
$P_{i}$ be this path. 
Increase $i$  by one and go to \textsc{Step $(i, \, 1)$}.

\bigskip

\begin{definition}
For any monotone path $P$ of 
length $l$, let $c(P)$ denote the \emph{convexity number} of $P$.
It is defined as the number of pairs of edges 
$(\epsilon_1, \epsilon_2)$ of $P$ with the property that 
$\epsilon_1$ is to the left of $\epsilon_2$, and $\epsilon_1$ has smaller slope than $\epsilon_2$.
\end{definition} 

\smallskip

So, for any monotone path $P$ of length $l$, we have
$0\le c(P)\le \binom{l}{2}$. If 
$c(P)=0$, then $P$ is convex from below, if 
$c(P)=\binom{l}{2}$, then 
it is convex from above.

\textsc{Algorithm Extend-Path($\alpha$)}
will terminate after a finite number of steps, since $G''$ contains no monotone path longer than $n$.

Suppose hat the algorithm returns edge $\epsilon$ and path $P_l$ of length $l$.
We distinguish two cases. 

\smallskip

\noindent \textbf{Case 1:} $l\le 4r$.
The algorithm stopped in \textsc{Step $(l+1, \, 2)$}.  

For 
$1\le i\le l+1$, let $s(i)$ be the number of 
times 
\textsc{Step $(i,  \, 1)$} 
was executed. 
Observe that in each step, $c(P)$ decreases by one.
On the other hand, in \textsc{Step $(i,  \, 2)$}, when we construct 
$P_i$ from $P_{i-1}$, we have $c(P_i)\le c(P_{i-1})+i-1.$
Therefore, $$\sum_{i=1}^{l+1}s(i)\le \sum_{i=1}^{l+1}(i-1)<l(l+1)/2.$$ 
Consequently, the total number of steps executed by the algorithm is at most $l(l+1)/2+l\le 16r^2$. 
Thus, 
all edges of $P_{l}$, including $\epsilon$, are  
at distance at most $16r^2$ from $\alpha$, contradicting our assumption that $\epsilon$ is an irregular edge. 

\smallskip

\noindent \textbf{Case 2:} $l\ge 4r$. For $i=4r$, the algorithm  constructed $P_{4r}$, a monotone path of length $4r$.

Again, for $1\le i\le 4r$, let $s(i)$ be the number of times \textsc{Step $(i,  \, 1)$} was executed. By the same argument as in Case 1, we obtain that 
$$\sum_{i=1}^{4r}s(i)\le \sum_{i=1}^{4r}(i-1)<4r(4r-1)/2.$$
Consequently, the total number of steps executed by the algorithm is at most $4r(4r-1)/2+4r<16r^2$. Therefore, 
all edges of $P_{4r}$ are at distance at most $16r^2$ from $\alpha$. By the assumption that there is no irregular edge in the 
$16r^2$-neighborhood of $\alpha$, we conclude that from $\alpha$ to any edge of $P$, there is a sequence of adjacent rhombi, of length at most $16r^2$. 

We also assumed that there is no fat rhombus. Therefore, two edges of a rhombus determine an angle at most $\pi/50r^2$. It follows that the angle between $\alpha$ and any edge of $P_{4r}$ is less than $16r^2\pi/50r^2< \pi/3$. Consequently, the length of the projection of every edge of $P_{4r}$ to the $x$-axis is longer than $1/2$, so the projection of $P_{4r}$ to the $x$-axis is longer than $2r$. This contradicts the fact that $P_{4r}$ lies entirely in $D$, a disk of radius $r$. This concludes the proof of the Lemma. \end{proof}

\smallskip

\section{Concluding remarks}
\begin{enumerate}
\item We conjecture that the maximum number of edges in a triangle-free matchstick graph is at most $2n-\sqrt{2n}+O(1)$, maybe even exactly $\left\lfloor 2n-\sqrt{2n-\frac{7}{4}}-\frac{3}{2}\right\rfloor$ for every $n\geq 1$, which would show that the lower bound in Proposition~\ref{prop:triangle-free-lower-bound} is tight.

\item
Can our result for triangle-free matchstick graphs be generalized to graphs of higher girth?
Euler's formula implies that in a planar graph of 
$n$ vertices and 
girth $g$, the number of edges $e$ is bounded above by $e\le \frac{g}{g-2}(n-2)$.
It is natural to ask whether we have the stronger bound
$e\le \frac{g}{g-2}(n-2)-c(g)\sqrt{n}$ for matchstick graphs of girth $g\geq 5$, where $c(g) > 0$.
This is in fact easy to prove for odd $g$ using the isoperimetric inequality, since it is known that a simple polygon with an odd number of sides, all of unit length, has an area not smaller than that of a unit-length equilateral triangle \cite{BKM99}.
Do we get the same bound for even $g\geq 6$?

\item 
Improve the bounds in Theorem~\ref{boundedmatchstick}.
Is it true that $e_r(n)=(2-c(r))n$ for some $c(r)=\Theta(1/r)$?
\end{enumerate}

\vspace{5mm}

\noindent
{\bf \large Acknowledgments.}
Panna Geh\'er, J\'anos Pach and G\'eza T\'oth were supported by ERC Advanced Grant `GeoScape' No.\ 882971 and by the National Research, Development and Innovation Office, NKFIH, K-131529. J\'anos Pach was also supported by NSF Grant DMS-1928930, while he was in residence at SLMath Berkeley, during the Spring 2025 semester.
Konrad Swanepoel was partially supported by ERC Advanced Grant `GeoScape' No.\ 882971 and by the Erd\H os Center.

\end{document}